\def\qed{\ifmmode\square\else\nolinebreak\hfill$\square$\fi\par\vskip12pt}
\def\Aut{{\rm Aut}} 
\def\a{\alpha} 
\def\Cay{{\rm Cay}} 
\def\PSL{{\rm PSL}}
\def\l{\langle} 
\def\r{\rangle} 
\def\D{\Delta} 
\def\Inn{{\rm Inn}} 
\def\AGL{{\rm AGL}}
\def\Symp{{\rm Sp}} 
\def\L{{\rm L}} 
\def\M{{\rm M}} 
\def\Out{{\rm Out}}
\def\Sym{{\rm Sym}}
\def\soc{{\rm soc}}
\newtheorem{theorem}{Theorem}[section]
\newtheorem{lemma}[theorem]{Lemma}
\newtheorem{proposition}[theorem]{Proposition}
\long\def\delete#1{}
\begin{document}

\title{Classification of tetravalent $2$-transitive non-normal Cayley graphs of finite simple groups}
		
\author[a]{Xin Gui Fang\thanks{E-mail: \texttt{xgfang@math.pku.edu.cn}}}
\author[a]{Jie Wang\thanks{E-mail: \texttt{wangj@pku.edu.cn}}}
\author[b]{Sanming Zhou\thanks{E-mail: \texttt{sanming@unimelb.edu.au}}}
\affil[a]{{\small LAMA and School of Mathematical Sciences, Peking University, Beijing 100871, P. R. China}}
\affil[b]{{\small School of Mathematics and Statistics, The University of Melbourne, Parkville, VIC 3010, Australia}}
 
\date{}

\openup 0.6\jot

\maketitle

\begin{abstract}
A graph $\Gamma$ is called $(G, s)$-arc-transitive if $G \le \Aut(\Gamma)$ is transitive on the set of vertices of $\Gamma$ and the set of $s$-arcs of $\Gamma$, where for an integer $s \ge 1$ an $s$-arc of $\Gamma$ is a sequence of $s+1$ vertices $(v_0,v_1,\ldots,v_s)$ of $\Gamma$ such that $v_{i-1}$ and $v_i$ are adjacent for
$1 \le i \le s$ and $v_{i-1}\ne v_{i+1}$ for $1 \le i \le s-1$. $\Gamma$ is called 2-transitive if it is $(\Aut(\Gamma), 2)$-arc-transitive but not $(\Aut(\Gamma), 3)$-arc-transitive. A Cayley graph $\Gamma$ of a group $G$ is called normal if $G$ is normal in $\Aut(\Gamma)$ and non-normal otherwise. It was proved by X. G. Fang, C. H. Li and M. Y. Xu that if $\Gamma$ is a tetravalent 2-transitive Cayley graph of a finite simple group $G$, then either $\Gamma$ is normal or $G$ is one of the groups $\PSL_2(11)$, $\M_{11}$, $\M_{23}$ and $A_{11}$. However, it was unknown whether $\Gamma$ is normal when $G$ is one of these four groups. In the present paper we answer this question by proving that among these four groups only $\M_{11}$ produces connected tetravalent 2-transitive non-normal Cayley graphs. We prove further that there are exactly two such graphs which are non-isomorphic and both determined in the paper. As a consequence, the automorphism group of any connected tetravalent 2-transitive Cayley graph of any finite simple group is determined. 
 
\medskip
\textbf{Keywords:} Cayley graph; $s$-arc-transitive graph; $2$-transitive graph; finite simple group

\textbf{AMS 2010 mathematics subject classiﬁcation:} 05C25, 20B25
\end{abstract}

\section{Introduction}

All groups considered in the paper are finite, and all graphs considered are finite, simple and undirected. Given a group $G$ and a subset $S$ of $G$ such that $1_G\notin
S$ and $S = S^{-1} := \{x^{-1}: x \in S\}$, the {\em Cayley graph} of $G$
relative to $S$ is defined to be the graph $\Gamma = \Cay(G,S)$ with vertex set $V\Gamma =G$ and edge set $E\Gamma =\{\{x,y\}\mid yx^{-1}\in S\}$. 
It is readily seen that $\Gamma$ has valency $|S|$. It is also easy to see that $\Gamma$ is connected if and only if $S$ is a generating set of $G$. In general, $\Gamma$ has exactly $|G:\langle S\rangle|$ connected components, each of which is isomorphic to $\Cay(\l S\r,S)$, where $\l S\r$ is the subgroup of $G$ generated by $S$. So we may focus on the connected case when dealing with Cayley graphs. Denote by $G_R$ the right regular representation of $G$. Define
$$A(G,S) := \{\,x\in\Aut(G)\mid S^x=S\,\}.$$
Then $A(G,S)$ is a subgroup of $\Aut(G)$ acting naturally on $G$. It is not difficult to see that $\Gamma=\Cay(G,S)$ admits $G_RA(G,S)$ as a subgroup of its full automorphism group $\Aut(\Gamma)$. It is well known (see \cite{Godsil, Xu}) that ${\rm
N}_{\Aut(\Gamma)}(G_R)=G_RA(G,S)$. Since $G_R\cong G$, we may use $G$ in place of $G_R$, so that $G_RA(G,S)$ is written as $G.A(G,S)$. $\Gamma$ is called a {\it normal} Cayley graph if $G$ is normal in $\Aut(\Gamma)$, that is, $\Aut(\Gamma)=G.A(G,S)$.

A fundamental problem in studying the structure of a graph
is to determine its full automorphism group. This is, in general,
quite difficult. However, for a connected Cayley graph $\Gamma =\Cay(G,S)$
of valency $d$, if $\Gamma$ is normal, then we know that its automorphism group is given by $\Aut(\Gamma)=G.A(G,S)$. Moreover, the subgroup $A(G,S)$ of $\Aut(G)$ acts faithfully on the {\em neighbourhood} $\Gamma(\a)$ of $\a\in V\Gamma$, where $\Gamma(\a)$ is defined as the set of vertices of $\Gamma$ adjacent to $\a$ in $\Gamma$. Hence $A(G,S)$ is
isomorphic to a subgroup of the symmetric group $S_d$ of degree $d$. In other words, if
$\Gamma$ is a normal Cayley graph, then the structure of $\Aut(\Gamma)$ is well
understood. In contrast, it is more challenging to determine the automorphism groups of non-normal Cayley graphs. As such non-normal Cayley graphs have attracted considerable attention in recent years.

Given an integer $s \ge 1$, an {\em s-arc} of a graph $\Gamma$ is a sequence $(v_0,v_1,\ldots,v_s)$ of $s+1$ vertices of $\Gamma$ such that $\{v_{i-1},v_i\}\in E\Gamma$ for
$i=1, 2, \ldots, s$ and $v_{i-1}\ne v_{i+1}$ for $i=1,2,\ldots,s-1$.
A graph $\Gamma$ is called {\it $(G,s)$-arc-transitive} if $G$ is a 
subgroup of $\Aut(\Gamma)$ that is transitive on $V\Gamma$ and transitive on the set
of $s$-arcs of $\Gamma$. A $(G, s)$-arc-transitive graph is called {\em $(G,s)$-transitive} if it is not $(G,s+1)$-arc-transitive. In particular, $\Gamma$ is called 
{\em $s$-arc-transitive} if it is $(\Aut(\Gamma), s)$-arc-transitive, and {\em $s$-transitive} if it is $(\Aut(\Gamma),s)$-transitive. A 1-arc-transitive graph is also called an {\em arc-transitive} or {\em symmetric} graph.
  
For any integer $s\geq 1$, a complete classification of cubic 
$s$-transitive non-normal Cayley graphs of finite simple groups was
obtained by S. J. Xu, M. Y. Xu and the first two authors of
the present paper (see \cite{XFWX1, XFWX2}). In the tetravalent case, C. H. Li, M. Y. Xu and the first author of the present paper proved (\cite[Theorem 1.1]{FLX}) that, if $\Gamma$ is a tetravalent 2-transitive Cayley graph of a finite simple group $G$, then either $\Gamma$ is normal or $G$ is one of the following groups: $\PSL_2(11)$ (two-dimensional projective special linear group over $\mathbb{F}_{11}$); $\M_{11}$ (Mathieu group of degree $11$); $\M_{23}$ (Mathieu group of degree $23$); $A_{11}$ (alternating group  of degree $11$). However, for a long time it was unknown whether $\Gamma$ is normal when $G$ is one of these four groups. In this paper we settle these unsolved cases and classify all connected tetravalent 2-transitive non-normal Cayley graphs of finite simple groups. As a consequence, the automorphism group of any connected tetravalent 2-transitive Cayley graph of any finite simple group is determined. 

The main result of this paper is as follows, where the graphs $\Gamma(\D_1)$ and $\Gamma(\D_2)$ involved will be defined in (\ref{eqn:3}) in Section 3.

\begin{theorem}\label{them1}
Let $G$ be a finite nonabelian simple group and $\Gamma=\Cay(G,S)$ a
connected tetravalent $2$-transitive Cayley graph of $G$. Then one of the following occurs:
\begin{enumerate}[\rm (a)]
	\item $\Gamma$ is normal, and $\Aut(\Gamma) = G.A_4$ or $G.S_4$;
	\item $G=\M_{11}$, $\Aut(\Gamma)=\Aut(\M_{12})=\M_{12}{:}2$, $\Aut(\Gamma)_{\alpha}\cong S_4$ for $\alpha\in V\Gamma$, $\Gamma$ is non-normal, $\Gamma \cong \Gamma(\D_1)$ or $\Gamma(\D_2)$, and $\Gamma(\D_1)$ and $\Gamma(\D_2)$ are not isomorphic.
\end{enumerate}
\end{theorem}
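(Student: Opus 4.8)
The plan is to start from the structural result of Fang, Li and Xu (Theorem 1.1 of \cite{FLX}), which reduces the non-normal case to the four candidate groups $\PSL_2(11)$, $\M_{11}$, $\M_{23}$ and $A_{11}$. So the first task is to dispose of three of these four groups and to analyse the fourth ($\M_{11}$) in detail. For each candidate group $G$, a connected tetravalent $2$-transitive non-normal Cayley graph $\Gamma=\Cay(G,S)$ has $G<X:=\Aut(\Gamma)$ with $G$ not normal in $X$, and $X$ acts $2$-transitively but not $3$-transitively on the $2$-arcs. I would first extract the constraints on the vertex stabiliser: since $\Gamma$ is tetravalent and $2$-arc-transitive, the stabiliser $X_\alpha$ acts $2$-transitively on the four neighbours, so $X_\alpha^{\Gamma(\alpha)}$ is one of the $2$-transitive groups of degree $4$, namely $A_4$ or $S_4$; combined with the fact that $\Gamma$ is not $3$-arc-transitive, this pins down $|X_\alpha|$ to a short list of possibilities and hence $|X|=|G|\cdot|X_\alpha|$ to a few explicit values.

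\medskip

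Next I would use the normal-quotient / simple-socle machinery. Since $G$ is simple and $G_R$ acts regularly (hence transitively) on $V\Gamma$, the group $X$ has a transitive simple subgroup, and one can analyse the socle of $X$ and the normaliser $\N_X(G)=G.A(G,S)$ with $A(G,S)\le\Aut(G)$ acting faithfully on $\Gamma(\alpha)$. Non-normality means $G\ne\N_X(G)$, so $G$ is a proper subgroup of some larger almost simple group $X$ in which $G$ is \emph{not} normal but still transitive. The key arithmetic leverage is that $|X|=|G|\cdot|X_\alpha|$ with $|X_\alpha|$ from the short list above: for $G=\PSL_2(11)$, $\M_{23}$ and $A_{11}$ I would show, by inspecting the maximal subgroups and the candidate orders $|X|$, that no such overgroup $X$ exists (e.g.\ via the list of maximal factorisations or by checking that the required index of $G$ in $X$ is incompatible with the order and subgroup structure of any almost simple group containing $G$ transitively). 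This eliminates three of the four groups.

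\medskip

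The remaining and genuinely productive case is $G=\M_{11}$. Here I expect the relevant overgroup to be $X=\Aut(\M_{12})=\M_{12}.2=\M_{11}S_4$ (this is strongly signalled by the statement, since $\M_{11}$ sits in $\M_{12}$ with the two natural point-stabiliser/regular actions, and $|\Aut(\M_{12})|/|\M_{11}|=|S_4|=24$, matching $|X_\alpha|\cong S_4$). Concretely I would realise $\M_{11}$ as a transitive but non-normal subgroup of $\Aut(\M_{12})$ of index $24$, identify $X_\alpha\cong S_4$, and recover $S$ as the set of neighbours of the base vertex under the $2$-arc-transitive action. I would then verify directly that the resulting $\Gamma$ is connected (i.e.\ $\langle S\rangle=G$), tetravalent, $2$-arc-transitive, and \emph{not} $3$-arc-transitive, and that $G$ is not normal in $X$. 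To get exactly \emph{two} graphs, I would enumerate the $X_\alpha\cong S_4$-orbits / the admissible coset geometries and show there are precisely two valid choices of the connection set (this is where the explicit subsets $\D_1,\D_2$ of (\ref{eqn:3}) come in), then show $\Gamma(\D_1)\not\cong\Gamma(\D_2)$ by a graph invariant (e.g.\ girth, or a count of short cycles, or by showing no isomorphism can be induced by an element of $X$).

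\medskip

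The main obstacle I anticipate is twofold: first, the case-elimination for $\PSL_2(11)$, $\M_{23}$ and $A_{11}$ requires careful handling of \emph{all} possible overgroups $X$ consistent with the allowed stabiliser orders, not merely the "obvious" candidate, so one must systematically rule out large primitive or imprimitive overgroups using factorisation results and order arithmetic. Second, in the $\M_{11}$ case the hard part is the \emph{exact} count: proving there are precisely two connection sets up to the equivalence induced by $\N_X(G)=G.A(G,S)$ (equivalently, two $A(\M_{11},S)$-classes), and then establishing non-isomorphism of $\Gamma(\D_1)$ and $\Gamma(\D_2)$ as abstract graphs rather than merely as $X$-inequivalent Cayley representations. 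Once $X$ is identified and $X_\alpha\cong S_4$ is fixed, part (a) for the normal case follows immediately from the faithful action of $A(G,S)$ on $\Gamma(\alpha)\cong\{1,2,3,4\}$, forcing $A(G,S)\le S_4$ with $2$-transitivity giving $A(G,S)=A_4$ or $S_4$, hence $\Aut(\Gamma)=G.A_4$ or $G.S_4$.
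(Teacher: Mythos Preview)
Your plan follows the paper's broad trajectory, but two steps would fail as stated. First, you assume $X=\Aut(\Gamma)$ is almost simple with $G$ contained in its socle; this is not automatic. The paper invokes the Fang--Praeger--Wang structure theorem (Lemma~\ref{fpw}) and must separately treat the non-quasiprimitive case, where $X$ has a nontrivial intransitive normal subgroup $H$: one passes to the quotient $\Gamma_H$, shows $H$ is semiregular of order $2$, identifies the possible pairs $(\soc(X/H),G)$, and then runs a fresh coset-graph search in $\M_{11}$, $\M_{12}$, $A_{12}$ with stabiliser $S_4$ to show nothing arises. Your phrase ``imprimitive overgroups'' gestures at this, but factorisation results and order arithmetic do not dispose of it. Second, your elimination mechanism for $\PSL_2(11)$, $\M_{23}$, $A_{11}$ is misconceived: the almost simple overgroups \emph{do} exist with the correct index---$(\M_{11},\PSL_2(11))$, $(\M_{24},\M_{23})$, $(A_{12},A_{11})$ (and $(S_{12},A_{11})$)---so inspecting maximal subgroups and orders cannot exclude them. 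What must be shown is that, although the factorisation $X=GK$ with $K\cong A_4$ or $S_4$ is available, no $2$-element $g$ satisfies the coset-graph conditions of Lemma~\ref{cosetgraph} (namely $g^2\in K$, $|K:K\cap K^g|=4$, $\langle K,g\rangle=X$). The paper does this via a permutation-character count (showing $A_4$ cannot act regularly in the degree-$12$ representation of $\M_{11}$), conjugacy-class power maps in $\M_{24}$ and $A_{12}$, and exhaustive computer search; your outline never identifies this as the actual obstruction.

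Two smaller points. Non-normality of $\Gamma$ means ${\rm N}_X(G)\ne X$, not $G\ne{\rm N}_X(G)$ as you wrote. And for the non-isomorphism of $\Gamma(\D_1)$ and $\Gamma(\D_2)$, the paper does not compute a graph invariant: it shows ${\rm N}_{\Sym(V)}(X)=X$ (using ${\rm C}_{\Sym(V)}(X)=1$, since $X_\alpha\cong S_4$ is self-normalising in $X$, together with $\Out(\Aut(\M_{12}))=1$), so any isomorphism would lie in $X$ and hence $\Gamma_1=\Gamma_2$ as labelled graphs; this is then refuted by checking directly that $\Gamma_1(\alpha)\ne\Gamma_2(\alpha)$. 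Your invariant approach might succeed, but it is a different argument and would require an explicit computation you have not specified.
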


In the next section we introduce notation and give a few preliminary results. In Section 3 we determine all tetravalent 2-transitive non-normal Cayley graphs of finite simple groups by analyzing the four groups above. In Section 4 we settle the isomorphism problem and thus complete the proof of Theorem \ref{them1}. As we will see shortly, even in the four innocent-looking cases above, considerable analysis and computation will be needed in order to establish Theorem \ref{them1}. We will also use \cite[Theorem 1.1]{FLX} in our proof of Theorem \ref{them1}.

\section{Preliminaries}

A permutation group $G$ acting on a set $\Omega$
is said to be {\em quasiprimitive} if each of its nontrivial
normal subgroups is transitive on $\Omega$. 
The {\em socle} of a group $G$, denoted by $\soc(G)$, is the product of all
minimal normal subgroups of $G$. In particular, $G$ is said to be {\it
almost simple} if $\soc(G)$ is a nonabelian simple group. Given a graph
$\Gamma$ and a group $K\leq\Aut(\Gamma)$, the {\em quotient graph} $\Gamma_K$ of $\Gamma$
relative to $K$ is defined as the graph with vertices the $K$-orbits on $V\Gamma$, 
such that two $K$-orbits, say, $X$ and $Y$, are adjacent in
$\Gamma_K$ if and only if there is an edge of $\Gamma$ with one end-vertex
in $X$ and the other end-vertex in $Y$.

The following lemma determines the vertex stabilizers for connected tetravalent 2-transitive graphs (see~\cite[Theorem 4]{Potocnik} or~\cite[Proposition 2.2]{GFL}).

\begin{lemma}\label{PGFL}
Let $\Gamma$ be a connected tetravalent $2$-transitive graph. Then the
vertex stabilizer of $\Gamma$ is $A_4$ or $S_4$.
\end{lemma}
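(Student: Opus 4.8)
Write $G=\Aut(\Gamma)$ and fix a vertex $\alpha$ together with a neighbour $\beta\in\Gamma(\alpha)$. The plan is to determine $G_\alpha$ by pitting its action on the neighbourhood $\Gamma(\alpha)$ against the hypothesis that $\Gamma$ is \emph{not} $(G,3)$-arc-transitive. First I would record the local action: since $\Gamma$ is $(G,2)$-arc-transitive, $G_\alpha$ is transitive on the ordered pairs of distinct neighbours of $\alpha$, so the group $L:=G_\alpha^{\Gamma(\alpha)}$ induced on the four vertices of $\Gamma(\alpha)$ is a $2$-transitive permutation group of degree $4$. The only such groups are $A_4$ and $S_4$, so, writing $G_\alpha^{[1]}$ for the kernel of the action of $G_\alpha$ on $\Gamma(\alpha)$, we get $G_\alpha/G_\alpha^{[1]}\cong A_4$ or $S_4$. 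The whole statement therefore reduces to proving that $G_\alpha^{[1]}=1$, for then $G_\alpha\cong L\in\{A_4,S_4\}$.

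Next I would bring in the structure of the stabiliser chain. Because a $2$-transitive local group is primitive, $\Gamma$ is $G$-locally primitive, and since $\Gamma$ is connected and $G$ is arc-transitive the Thompson--Wielandt theorem applies: the pointwise stabiliser $G_{\alpha\beta}^{[1]}:=G_\alpha^{[1]}\cap G_\beta^{[1]}$ of $\Gamma(\alpha)\cup\Gamma(\beta)$ is a $p$-group for a single prime $p$, necessarily $p\in\{2,3\}$ as only these primes divide $|L|$. The relevant reformulation of the hypothesis is the standard fact that, given $2$-arc-transitivity, $\Gamma$ is $(G,3)$-arc-transitive if and only if the pointwise stabiliser of a $2$-arc $(\alpha_0,\alpha_1,\alpha_2)$ acts transitively on the three neighbours of $\alpha_2$ other than $\alpha_1$. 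I would combine this with the observation that $G_{\alpha_1}^{[1]}$ fixes $\alpha_0,\alpha_1,\alpha_2$ and hence lies inside that pointwise $2$-arc stabiliser, while its action on $\Gamma(\alpha_2)$ embeds $G_{\alpha_1}^{[1]}/G_{\alpha_1\alpha_2}^{[1]}$ into the stabiliser of $\alpha_1$ in $G_{\alpha_2}^{\Gamma(\alpha_2)}\cong L$, which is $C_3$ (if $L=A_4$) or $S_3$ (if $L=S_4$).

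The heart of the argument, and its main obstacle, is to show that a nontrivial $G_\alpha^{[1]}$ would force exactly the forbidden transitivity. Concretely I would argue by contraposition: assuming $G_\alpha^{[1]}\ne 1$, one first shows $G_{\alpha\beta}^{[1]}=1$, since a nontrivial $p$-group there propagates along the connected graph to enlarge the successive pointwise stabilisers $G_\gamma^{[1]}$ and ultimately produces a $3$-cycle in the action on some $\Gamma(\alpha_2)\setminus\{\alpha_1\}$; with $G_{\alpha\beta}^{[1]}=1$ in hand, $G_\alpha^{[1]}$ now embeds into $C_3$ or $S_3$ for \emph{every} neighbour simultaneously, and one shows that any surviving nontrivial element again yields a transitive (indeed $3$-cycle) action on the three further neighbours of a vertex at distance two, i.e.\ $(G,3)$-arc-transitivity, contradicting the hypothesis. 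This propagation is the delicate, genuinely computational step: it requires fitting together the possible extensions of $A_4$ and $S_4$, the normality of $G_\alpha^{[1]}$, $G_\beta^{[1]}$ and $G_{\alpha\beta}^{[1]}$ inside the arc stabiliser $G_{\alpha\beta}$, and the Thompson--Wielandt $p$-group conclusion, and it is the standard but intricate amalgam (local) method for small-valency $s$-arc-transitive graphs. Once it is carried out we obtain $G_\alpha^{[1]}=1$, whence $G_\alpha\cong A_4$ or $S_4$, completing the proof; the initial reduction and the classification of degree-$4$ $2$-transitive groups used above are routine by comparison.
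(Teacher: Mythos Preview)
The paper does not prove this lemma at all; it simply quotes it from \cite[Theorem~4]{Potocnik} and \cite[Proposition~2.2]{GFL}. So there is no ``paper's own proof'' to compare against---only a citation.

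Your strategy is sound---identify the local action $L=G_\alpha^{\Gamma(\alpha)}$ as $A_4$ or $S_4$ and then kill the kernel $G_\alpha^{[1]}$---but you make the second step considerably harder than necessary by reaching for Thompson--Wielandt and amalgam machinery. The key observation you almost isolate but then bury is this: $G_\beta^{[1]}$ is normal in $G_{\beta\gamma}$, and $G_{\beta\gamma}$ acts on $\Gamma(\gamma)$ with image exactly the point stabiliser $L_\beta\in\{C_3,S_3\}$; hence the image of $G_\beta^{[1]}$ on $\Gamma(\gamma)$ is a \emph{normal} subgroup of $L_\beta$. Every nontrivial normal subgroup of $C_3$ or $S_3$ contains the $3$-cycle and is therefore transitive on $\Gamma(\gamma)\setminus\{\beta\}$. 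Since $G_\beta^{[1]}$ lies in the pointwise stabiliser of any $2$-arc with middle vertex $\beta$, a nontrivial image would immediately yield $3$-arc-transitivity, contrary to hypothesis. Thus $G_\beta^{[1]}$ acts trivially on $\Gamma(\gamma)$ for every neighbour $\gamma$, i.e.\ $G_\beta^{[1]}\le G_\gamma^{[1]}$; connectedness then forces $G_\beta^{[1]}=1$. No case split on $G_{\alpha\beta}^{[1]}$, no Thompson--Wielandt, no ``propagation of $p$-groups'' is required. Your outline is not wrong, just circuitous: the step you flag as ``delicate, genuinely computational'' collapses to this two-line normality argument once you note that $C_3$ and $S_3$ have no nontrivial intransitive normal subgroups.
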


The next lemma describes possible structure of the full
automorphism group of a connected Cayley graph of a finite simple
group.

\begin{lemma}{\rm (\cite[Theorem 1.1]{FPW})}
\label{fpw}
Let $G$ be a finite nonabelian simple group and $\Gamma=\Cay(G,S)$ a
connected Cayley graph of $G$. Let $M$ be a subgroup of $\Aut(\Gamma)$
containing $G.A(G,S)$. Then either $M=G.A(G,S)$ or one of the
following holds:
\begin{enumerate}[\rm (a)]
\item $M$ is almost simple, and $\soc(M)$ contains $G$ as
a proper subgroup and is transitive on $V\Gamma$;

\item $G\cdot\Inn(G)\leq M=G\cdot A(G,S)\cdot2$ and
$S$ is a self-inverse union of $G$-conjugacy classes;

\item $M$ is not quasiprimitive and there is a maximal
intransitive normal subgroup $H$ of $M$ such that one of the
following holds:
\begin{enumerate}[\rm (i)]
\item $M/H$ is almost simple, and $\soc(M/H)$ contains
$GH/H\cong G$ and is transitive on $V\Gamma_H$;

\item $M/H=\AGL_3(2)$, $G=\L_2(7)$, and $\Gamma_H\cong K_8$;

\item $\soc(M/H)\cong T\times T$, and $GH/H\cong G$ is a
diagonal subgroup of $\soc(M/H)$, where $T$ and $G$ are given in Table 1.
\end{enumerate}
\end{enumerate}

Moreover, there are examples of connected Cayley graphs of finite
simple groups in each of these cases.\end{lemma}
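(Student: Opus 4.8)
The plan is to treat $M$ purely as a permutation group on $V\Gamma$ and to exploit only two structural facts: that $G=G_R$ is a regular nonabelian simple subgroup of $M$, and that $N_M(G)=G.A(G,S)$. The second holds because $N_{\Aut(\Gamma)}(G)=G.A(G,S)$ and $G.A(G,S)\le M$, so $N_M(G)=M\cap N_{\Aut(\Gamma)}(G)=G.A(G,S)$. The problem then becomes one about finite quasiprimitive permutation groups containing a regular (or, after passing to a quotient, merely transitive) nonabelian simple subgroup with this prescribed self-normalising property, and the natural tools are the O'Nan--Scott theorem for quasiprimitive groups together with normal-quotient reduction. I would split according to whether or not $M$ is quasiprimitive on $V\Gamma$.

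\emph{Quasiprimitive case.} Here every nontrivial normal subgroup of $M$ is transitive. The affine type cannot occur, since a regular subgroup would have order equal to the degree $p^d$, whereas a nonabelian simple group has no prime-power order; hence $\soc(M)=T^k$ with $T$ nonabelian simple. If $M$ is almost simple ($k=1$), then $G\cap T$ is normal in the simple group $G$, and $G\cap T=1$ would embed $G$ into $M/T\le\Out(T)$, which is soluble by the Schreier conjecture; thus $G\le T=\soc(M)$. Now $G=T$ gives $M=N_M(G)=G.A(G,S)$, while $G<T$ gives conclusion (a). For $k\ge2$ the regular simple $G$ is forced to project onto the simple factors of $\soc(M)$, and combined with $N_M(G)=G.A(G,S)$ this confines the type to the product and twisted-wreath configurations. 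The decisive mechanism for (b) is that the left regular representation $G_L$ lies in $\Aut(\Gamma)$ exactly when $S$ is a union of $G$-conjugacy classes; in that situation the inversion map $\tau\colon x\mapsto x^{-1}$ is also an automorphism and conjugates $G_R$ to $G_L$, which produces the extra factor $2$ and the relation $G.\Inn(G)\le M=G.A(G,S).2$.

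\emph{Non-quasiprimitive case.} I would take a maximal intransitive normal subgroup $H\trianglelefteq M$ and pass to the quotient graph $\Gamma_H$, whose vertices are the $H$-orbits. Maximality of $H$ makes $M/H$ quasiprimitive on $V\Gamma_H$, because any normal subgroup of $M$ strictly containing $H$ must, by maximality, be transitive and hence transitive on $H$-orbits. Since $G$ is transitive and $H$ is not, $G\cap H=1$, so $GH/H\cong G$; moreover $G$ acts faithfully on the blocks, as its kernel is a proper normal subgroup of the simple group $G$ and a transitive group fixing every block would leave only one block, contradicting the intransitivity of $H$. Thus $GH/H\cong G$ is a \emph{transitive} nonabelian simple subgroup of the quasiprimitive group $M/H$, but it is no longer regular. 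Re-running the quasiprimitive analysis for $M/H$ with this merely transitive subgroup produces the three parts of (c): an almost simple quotient gives (c)(i); the affine type, excluded above in the regular case but admissible here, forces $M/H=\AGL_3(2)$ with $GH/H\cong\L_2(7)\cong\L_3(2)$ acting on the eight points of $\mathrm{AG}(3,2)$ and $\Gamma_H\cong K_8$, which is (c)(ii); and a diagonal copy of $G$ inside $T\times T$ gives (c)(iii).

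The main obstacle is the detailed O'Nan--Scott bookkeeping: for each quasiprimitive type one must decide exactly when a regular, respectively transitive, nonabelian simple subgroup carrying the prescribed normaliser can arise, and isolate the sporadic coincidences, above all $\L_2(7)\cong\L_3(2)$, that force the exceptional entries in the list. Finally, the ``moreover'' clause is a separate, constructive task: for each surviving case one must exhibit a concrete connected Cayley graph $\Cay(G,S)$ realising it, which I would do by choosing $S$ with the symmetry demanded by that case (for instance a self-inverse union of conjugacy classes for (b)) and then verifying connectivity and the asserted automorphism group directly.
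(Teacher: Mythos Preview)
The paper does not prove this lemma at all: it is quoted verbatim as \cite[Theorem~1.1]{FPW} and used as a black box. So there is no ``paper's own proof'' to compare your attempt against; the relevant comparison is with the original argument of Fang, Praeger and Wang.

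Your outline is in fact the shape of that original argument: split on quasiprimitivity of $M$, invoke the O'Nan--Scott theorem for quasiprimitive groups, and in the non-quasiprimitive case pass to a maximal intransitive normal subgroup $H$ so that $M/H$ is quasiprimitive on $V\Gamma_H$ with $GH/H\cong G$ transitive but not regular. The identification of (c)(ii) with the affine type surviving only for $\L_2(7)\cong\L_3(2)$ inside $\AGL_3(2)$ is exactly the point.

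One imprecision worth flagging: in the quasiprimitive case with $k\ge2$ you attribute conclusion (b) to ``product and twisted-wreath configurations''. In fact the type that produces (b) is the \emph{holomorph simple} type, where $\soc(M)\cong G\times G$ and the regular copy $G_R$ sits diagonally; the second factor is then conjugate to $G_L$, which is why $G_L\le\Aut(\Gamma)$ and hence $S$ a union of $G$-conjugacy classes is the governing condition. The product-action and twisted-wreath types are eliminated for a regular nonabelian simple subgroup. Your mechanism via $G_L$ and the inversion map $\tau$ is correct, but the O'Nan--Scott label is off. Apart from this, and the substantial bookkeeping you acknowledge is needed (especially pinning down Table~1 for (c)(iii)), the plan is sound.
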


\begin{table} \centering \begin{tabular}{cccc}\hline
&$G$ &$T$ & $|V\Gamma_K|$ \\\hline
1&$A_6$ & $G$ &36\\\hline
2&$M_{12}$ &$G$ or $A_{m}$ & 144\\\hline
3&$\Symp_4(q)$ &$G$ or $A_{m}$ or &$\frac{q^4(q^2-1)^2}{4}$ \\
&($q=2^a>2$)& $\Symp_{4r}(q_0)$ ($q=q_0^r$) &\\ \hline
4&&$\Symp_{4r}(q_0)$ ($q=q_0^r$) &$\frac{q^4(q^2-1)^2}{2}$\\
\hline 5&${\rm P\Omega^+_8}(q)$ &$G$ or $A_{m}$ or
&$\frac{q^6(q^4-1)^2}{(2,q-1)^2}$\\ && $\Symp_8(2)$ (if $q=2$) &\\\hline
\end{tabular}
\caption{Lemma \ref{fpw} (c)(iii).}
\end{table}

A subgroup $K$ of a group $G$ is called {\it core-free} if
$\cap_{g\in G}K^g=1$. Given a core-free subgroup $K$ of $G$ and an
element $g\in G\setminus{\rm N}_G(K)$ such that $g^2\in K$ and
$G=\l K,g\r$, the {\it coset graph} $\Gamma^*=\Gamma(G,K,g)$ is defined by
$$V\Gamma^*=[G:K]=\{\,Kx\mid x\in G\,\},\ \ E\Gamma^*=\{\,\{Kx,Ky\}\mid xy^{-1}\in KgK\,\}.$$
A well known result due to Sabidussi~\cite{Sabidussi} and Lorimer~\cite{Lorimer} asserts that $\Gamma^*$ is $G$-arc-transitive and up to isomorphism every $G$-arc-transitive graph can be constructed this way. The following lemma is a refinement of this result (see \cite[Theorem 2.1]{FPSuzuki}).

\begin{lemma}\label{cosetgraph} Let $\Gamma$ be a finite connected
$(G, 2)$-arc-transitive graph of valency $d$. Then there exists a
core-free subgroup $K$ of $G$ and an element $g\in G$ such that
\begin{enumerate}[\rm (a)]
\item $g\notin{\rm N}_G(K)$, $g^2\in G$, $\l K,g\r=G$;
\item the action of $K$ on $[K:K\cap K^g]$ by right multiplication is transitive, where $|K:K\cap K^g|=d$; and
\item $\Gamma\cong\Gamma(G,K,g)$.
\end{enumerate}
Moreover, one can choose $g$ to be a $2$-element.

Conversely, if $G$ is a finite group with a core-free subgroup $K$
and an element $g$ satisfying (a) and (b) above, then
$\Gamma^*=\Gamma(G,K,g)$ is a connected $(G, 2)$-arc-transitive graph, and $G$
acts faithfully on the vertex set $[G:K]$ of $\Gamma^*$ by right multiplication.
\end{lemma}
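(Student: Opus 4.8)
The plan is to prove both directions via the Sabidussi--Lorimer correspondence between arc-transitive graphs and coset graphs, refined so that $2$-arc-transitivity is detected through the action of a point stabiliser on a neighbourhood. For the forward direction, note first that a $(G,2)$-arc-transitive graph is in particular vertex-transitive with $G$ acting faithfully on $V\Gamma$. Fixing $\alpha\in V\Gamma$ I would put $K=G_\alpha$; then $K$ is core-free and $Kx\mapsto\alpha^x$ is a $G$-equivariant bijection $[G:K]\to V\Gamma$. Picking a neighbour $\beta$ of $\alpha$ and applying arc-transitivity to the arcs $(\alpha,\beta)$ and $(\beta,\alpha)$, I obtain $g\in G$ with $\alpha^g=\beta$ and $\beta^g=\alpha$, so that $g^2\in G_\alpha=K$. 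Connectivity of $\Gamma$ is equivalent to $\l K,g\r=G$, because the connected component of the vertex $K$ consists precisely of the cosets $Kx$ with $x\in\l K,g\r$; and since the valency is $d\ge2$ we have $K\cap K^g=G_\alpha\cap G_\beta\neq K$, i.e.\ $g\notin{\rm N}_G(K)$. Together with $g^2\in K$ this is (a).

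For (b) and (c) I would use that the neighbourhood $\Gamma(\alpha)=\{\alpha^{gk}:k\in K\}$ is a single $K$-orbit, the stabiliser in $K$ of $\beta=\alpha^g$ being $K\cap K^g$ of index $d$; thus $\Gamma(\alpha)$ is $K$-equivariantly identified with $[K:K\cap K^g]$. Transporting adjacency through the bijection $Kx\mapsto\alpha^x$, the vertices $\alpha^x,\alpha^y$ are adjacent exactly when $\alpha^{yx^{-1}}\in\Gamma(\alpha)$, i.e.\ when $yx^{-1}\in KgK$; and $g^2\in K$ gives $KgK=Kg^{-1}K=(KgK)^{-1}$, so this relation is symmetric and the bijection is a graph isomorphism $\Gamma\cong\Gamma(G,K,g)$, which is (c). The transitivity of $K$ on $[K:K\cap K^g]$ recorded in (b) is then just the arc-transitivity of $G$; I would emphasise that what corresponds to the full hypothesis that $\Gamma$ is $2$-arc-transitive is the $2$-transitivity of $K$ on this neighbourhood, the property the converse must reproduce.

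To secure the final clause that $g$ may be chosen to be a $2$-element, I would pass to the setwise stabiliser $L$ of the edge $\{\alpha,\beta\}$, whose pointwise stabiliser $B=K\cap K^g$ has index $2$ in $L$. A Sylow $2$-subgroup $P$ of $L$ cannot lie in $B$, so it contains a $2$-element $g'$ interchanging $\alpha$ and $\beta$; then $g^{-1}g'\in B\subseteq K$, whence $Kg'K=KgK$ and therefore $\Gamma(G,K,g')=\Gamma(G,K,g)$. Since this graph is connected we again get $\l K,g'\r=G$, and all of (a)--(c) persist with the $2$-element $g'$ in place of $g$.

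For the converse I would check, from (a) and (b), that $\Gamma^*=\Gamma(G,K,g)$ is a genuine simple graph: there is no loop at $K$ because $g\notin K$ (as $K\le{\rm N}_G(K)$), and adjacency is symmetric because $g^2\in K$. Right multiplication preserves the relation $xy^{-1}\in KgK$, so $G\le\Aut(\Gamma^*)$, and the kernel of this action is the core of $K$, which is trivial; hence $G$ is faithful. Connectivity follows from $\l K,g\r=G$, vertex-transitivity is clear, and the transitivity in (b) gives arc-transitivity while its strengthening to $2$-transitivity of $K$ on $[K:K\cap K^g]$ yields $2$-arc-transitivity. I expect the main obstacle to be bookkeeping rather than depth: precisely, verifying that the replacement of $g$ by a $2$-element representative leaves the double coset $KgK$---and hence the whole graph together with properties (a)--(c)---unchanged, and, conceptually, pinning down that it is the $2$-transitivity (not mere transitivity) of $K$ on the neighbourhood that matches $2$-arc-transitivity, so that the transitivity statements in (b) and in the two directions are calibrated correctly.
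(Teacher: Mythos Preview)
The paper does not supply a proof of this lemma; it is quoted from \cite[Theorem~2.1]{FPSuzuki} without argument, so there is no in-paper proof to compare against. Your proposal follows the standard Sabidussi--Lorimer coset-graph construction and is correct.

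Two remarks. First, you have implicitly and correctly repaired two slips in the printed statement: ``$g^2\in G$'' in (a) is trivially true and must mean ``$g^2\in K$'' (which is what you establish), and ``transitive'' in (b) is automatic for $K$ acting on its own right cosets, so the intended hypothesis is ``$2$-transitive''---precisely what you flag as the condition that matches $(G,2)$-arc-transitivity in both directions. Second, your Sylow argument for replacing $g$ by a $2$-element is clean and complete: from $|L:B|=2$ the $2$-part of $|L|$ strictly exceeds that of $|B|$, so a Sylow $2$-subgroup of $L$ meets $L\setminus B$; any such $g'\in L\setminus B$ satisfies $g^{-1}g'\in B\le K$, hence $Kg'K=KgK$, and the remaining conditions in (a) follow from the unchanged coset graph (connectivity gives $\l K,g'\r=G$, and $(g')^2$ fixes $\alpha$ so lies in $K$).
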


\section{Tetravalent 2-transitive non-normal Cayley graphs}

The purpose of this section is to prove the following proposition, which gives all tetravalent 2-transitive non-normal Cayley graphs of finite simple groups. We postpone the definition of $\Gamma(\D_1)$ and $\Gamma(\D_2)$ to (\ref{eqn:3}). 

\begin{proposition}\label{M11} 
Let $G$ be a finite simple group and $\Gamma$ a connected tetravalent 2-transitive non-normal Cayley graph of $G$. Then
$G=\M_{11}$, $\Aut(\Gamma)=\Aut(\M_{12})=\M_{12}{:}2$, $\Aut(\Gamma)_\alpha=S_4$, and $\Gamma$ is isomorphic to $\Gamma(\D_1)$ or $\Gamma(\D_2)$.
\end{proposition}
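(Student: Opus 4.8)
The plan is to start from \cite[Theorem 1.1]{FLX}, quoted in the introduction, which because $\Gamma$ is non-normal immediately restricts $G$ to the four groups $\PSL_2(11)$, $\M_{11}$, $\M_{23}$ and $A_{11}$. Put $M=\Aut\Gamma$. By Lemma~\ref{PGFL} the vertex stabilizer $M_\alpha$ is $A_4$ or $S_4$, and since the Cayley graph structure makes $G$ regular on $V\Gamma$ we get $|M|=|G|\,|M_\alpha|$, so $|M:G|=|M_\alpha|\in\{12,24\}$. As $\Gamma$ is non-normal, $M\neq{\rm N}_M(G)=G.A(G,S)$, so Lemma~\ref{fpw} applies to $M$ and places us in one of its cases (a), (b), (c). The first goal is to reduce everything to the almost simple case (a).

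Case (b) is impossible: there $M=G.A(G,S).2$, so ${\rm N}_M(G)=G.A(G,S)$ has index $2$ and hence is normal in $M$; but $A(G,S)\le M_\alpha$ is soluble while $G$ is perfect, so $G$ is the final term of the derived series of $G.A(G,S)=G\rtimes A(G,S)$ and is therefore characteristic in it, forcing $G\trianglelefteq M$ and contradicting non-normality. In case (c) the group $\L_2(7)$ of (c)(ii) and the groups listed in Table~1 for (c)(iii) are none of our four candidates, so these subcases are vacuous; for (c)(i) the quotient $\Gamma_H$ is $(M/H,2)$-arc-transitive with $M/H$ almost simple and $\soc(M/H)\supseteq GH/H\cong G$, which reduces matters to the almost simple analysis of a quotient graph, the remaining point being to exclude proper normal covers via the index bound obtained below. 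Hence we may assume $M$ is almost simple with $T:=\soc(M)\supsetneq G$ transitive on $V\Gamma$.

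In case (a), transitivity of $T$ on the $|G|$ vertices together with regularity of $G$ gives the exact factorization $T=G\,T_\alpha$ with $T_\alpha=T\cap M_\alpha$ of order $|T:G|\le|M_\alpha|\le24$; in particular $|T|\le24\,|G|$. Imposing this index bound and inspecting the simple overgroups of each candidate (via the subgroup structure of the Mathieu and alternating groups) leaves only the chains $\PSL_2(11)<\M_{11}$, $\M_{11}<\M_{12}$, $\M_{23}<\M_{24}$ and $A_{11}<A_{12}$. Adjoining the admissible outer automorphisms of each socle then yields the short list of pairs $(M,M_\alpha)$ equal to $(\M_{11},A_4)$ for $\PSL_2(11)$, to $(\M_{12},A_4)$ or $(\M_{12}.2,S_4)$ for $\M_{11}$, to $(\M_{24},S_4)$ for $\M_{23}$, and to $(A_{12},A_4)$ or $(S_{12},S_4)$ for $A_{11}$; for each one must still decide whether $G$ arises as a complement to $M_\alpha$, i.e.\ whether the relevant exact factorization of $M$ exists.

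For every surviving pair I would build all candidate graphs through the coset construction of Lemma~\ref{cosetgraph}: take $K=M_\alpha$ and search, up to conjugacy, for a $2$-element $g$ with $g\notin{\rm N}_M(K)$, $g^2\in K$, $\l K,g\r=M$ and $|K:K\cap K^g|=4$ on which $K$ acts $2$-transitively, and then test whether the resulting connected tetravalent $(M,2)$-arc-transitive graph admits $G$ as a regular subgroup, is $2$-transitive but not $3$-arc-transitive, and has $M$ as its full automorphism group. This case analysis is the heart of the proof and its main obstacle. The expectation, to be confirmed by explicit computation in the Mathieu and alternating groups, is that for $\PSL_2(11)$, $\M_{23}$ and $A_{11}$ no configuration survives---the required factorization or the required $2$-element fails to exist, or the coset graph is not tetravalent $2$-transitive, or its full automorphism group properly contains $M$ so that $G$ is in fact normal---whereas for $G=\M_{11}$ exactly the pair $(\M_{12}.2,S_4)=(\Aut(\M_{12}),S_4)$ is realized, producing precisely two coset graphs $\Gamma(\D_1)$ and $\Gamma(\D_2)$. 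That these two graphs are non-isomorphic and exhaust the possibilities is then the task of Section~4, completing the proof.
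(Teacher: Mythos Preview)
Your overall strategy matches the paper's: start from \cite{FLX}, invoke Lemma~\ref{PGFL} and Lemma~\ref{fpw}, reduce to the almost simple case, list the four socle pairs, and resolve each by the coset-graph construction of Lemma~\ref{cosetgraph}. Your elimination of case~(b) and of (c)(ii)--(iii) is fine.

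The genuine gap is your treatment of case~(c)(i). Saying it ``reduces matters to the almost simple analysis of a quotient graph'' conflates two distinct subcases that the paper handles separately. First, when $T:=\soc(M/H)\cong G$ there is no proper simple overgroup to feed into your chain $\PSL_2(11)<\M_{11}$, etc.; the paper disposes of this by showing $G$ centralises $H$ (since $\Aut(H)$ is too small to embed $G$), so $GH=G\times H$, and then $|\Out(G)|\le 2$ forces $G\trianglelefteq M$, a contradiction. Second, when $T\not\cong G$ the quotient $\Gamma_H$ has only $|G|/|H|$ vertices, so the stabiliser $T_{\bar\alpha}$ has order $|T|/(|G|/|H|)$ rather than $|T|/|G|$; the paper pins down $|H|=2$ (after ruling out $3\mid|H|$ via Guralnick and $H_\alpha\ne1$ via valency), obtains $T_{\bar\alpha}\cong S_4$, and then runs \emph{separate} coset-graph searches in $\M_{11}$, $\M_{12}$ and $A_{12}$ with $K\cong S_4$ of this different index, finding no admissible $g$. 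These searches are not the same as the ones in your almost simple case~(a) and need to be carried out explicitly.

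One smaller point: in the $\M_{11}$ analysis you list both $(\M_{12},A_4)$ and $(\M_{12}{:}2,S_4)$ as surviving pairs, but to reach the conclusion $\Aut\Gamma=\M_{12}{:}2$ you must show that the $(\M_{12},A_4)$ coset graph is not new. The paper verifies that all twelve admissible $g$ for $K=A_4$ lie in $\D_1$ and that $A_4x\mapsto S_4x$ is a graph isomorphism onto $\Gamma(\D_1)$, so $\M_{12}$ is not the full automorphism group. Your plan should include this step.
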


\begin{proof} 
Suppose that $G$ is a finite simple group and $\Gamma=\Cay(G,S)$ is a connected tetravalent 2-transitive non-normal Cayley graph of $G$. Then, by~\cite[Theorem 1.1]{FLX}, $G$ is one of the following groups:

\begin{equation}\label{eqn:1}
\PSL_2(11),\ \M_{11},\ \M_{23},\ A_{11}.
\end{equation}
Write $A=\Aut(\Gamma)$. Then $A=GA_\alpha$ with $G\cap A_\alpha=1$ and
$A_\alpha=A_4$ or $S_4$ by Lemma~\ref{PGFL}. We consider the following two situations separately.

\bigskip
{\bf Situation 1: $A$ is quasiprimitive on $V\Gamma$.}

In this situation, by Lemma~\ref{fpw} we know that $A$ is almost simple. Let $T=\soc(A)$.
Note that $|S_4|=24$ is divisible by $|A:G|$. It follows that $(T,G)$ is one of the following pairs:
\begin{equation}
\label{eqn:2}
(\M_{11},\ \PSL_2(11)), \ (\M_{12},\ \M_{11}),\
(\M_{24},\ \M_{23}),\ (A_{12},\ A_{11}).
\end{equation}

{\bf Case 1: $(T,G)\in\{(\M_{11},\ \PSL_2(11)),\ (\M_{24},\ \M_{23}),\ (A_{12},\ A_{11})\}$}

First we consider the case $(T,G)=(\M_{11},\ \PSL_2(11))$ and suppose
$\M_{11}=\PSL_2(11)A_4$. It is well known that $M_{11}$ has a faithful
permutation representation of degree 12 acting on $\Omega=\{1,2,\ldots,12\}$. In this
representation, $\PSL_2(11)$ is the point-stabilizer and the subgroup
$A_4$ should be regular on $\Omega$. However, according to the permutation character
$\chi=\chi_1+\chi_{11}$ taken from ATLAS \cite[p. 18]{atlas}, we have $\chi(1A)=12$,
$\chi(2A)=4$ and $\chi(3A)=3$. Therefore, the number of orbits of $A_4$ on $\Omega$ is
$$
\frac{1}{|A_4|}\sum_{g\in A_4}\chi(g)=\frac{1}{12}(12\cdot1+4\cdot3+3\cdot8)=4,
$$
which contradicts the regularity of $A_4$.

Next assume $(T,G)=(\M_{24},\ \M_{23})$. In this case $\M_{24}=\M_{23}K$ for some
subgroup $K\cong S_4$. Since $K$ is regular on $\Omega=\{1,2,\ldots,24\}$, following the
notation of \cite[p. 96]{atlas}, the involution of $K$ must be in class $2B$ and the elements
of order 3 in $3B$. There are two classes of regular elements of order 4, namely $4A$ and $4C$. However,
the power map shows that $4A^2=2A$, which can not be the case. So the elements of order 4 in $K$
must be in $4C$. Now suppose that a 2-element $g\in\M_{24}$ satisfies (a) and (b) in
Lemma~\ref{cosetgraph}. Since $8A^2=4B$, $4A^2=2A$ and $4B^2=2A$, we can only have $g\in 2A$,
$2B$ or $4C$. However, an exhaustive search shows that, for such an element $g$, the subgroup
$\l K,g\r \lneqq \M_{24}$, a contradiction.

Finally, we consider $(T,G)=(A_{12},\ A_{11})$. If $A=A_{12}$, then $A=A_{11}K$ for some subgroup
$K\cong A_4$. Since $K$ is regular on $\Omega=\{1,2,\ldots,12\}$, the involution in $K$ must
be in conjugacy class $2B$ and the elements of order 3 in class $3C$, following the notation of \cite[p. 92]{atlas}. Suppose that a 2-element $g\in A_{12}$ satisfies (a) and (b) in
Lemma~\ref{cosetgraph}. It is evident that $g$ has order $2$ or $4$. According to the power map of conjugacy classes of $A_{12}$, if $g$ has order $4$, then $g^2$ can not be in $2B$. Thus $g$ must have order $2$. Furthermore, $|K\cap K^g|=3$ implies that $g$ normalizes the element of $3C$. With the help of this information, an exhaustive search shows that $\l K,g\r$ can not be $A$. Similarly, when $A=S_{12}$, there is no 2-element satisfying (a) and (b) in Lemma~\ref{cosetgraph}.

The argument above shows that Case 1 does not occur.

\smallskip
{\bf Case 2: $(T,G)=(\M_{12},\ \M_{11})$}

In this case $\Gamma$ is either $(\Aut(\M_{12}),2)$-arc
transitive or $(\M_{12},2)$-arc-transitive. Consider first 
$\Aut(\M_{12})=\M_{12}{:}2$. This group contains a unique class
of subgroups isomorphic to $\M_{11}$. Since $|A:G|=24$, we have
$A_\alpha=S_4$ by Lemma~\ref{PGFL}. Computation using
GAP \cite{Gap} yields the following:

\begin{enumerate}
\item[\rm(a)] $A=\M_{12}{:}2$ has a unique class of subgroups
 $K\cong S_4$ such that $K\cap \M_{11}=1$;

\item[\rm(b)] for a subgroup $K$ in (a), there are in total sixteen
2-elements $g\in A$ such that $K$ and $g$ satisfy (a) and (b) in
Lemma~\ref{cosetgraph}; denote the set of these 16
elements by $\D$;

\item[\rm(c)] ${\rm N}_A(K)=K\cong S_4$ and the conjugate action of
$K$ on $\D$  produces two orbits, denoted by $\D_1$ and $\D_2$,
with $|\D_1|=12$ and $|\D_2|=4$.
\end{enumerate}

Let $K=S_4$ be a subgroup obtained in (a). For any $g$ satisfying (b), the coset
graph $\Gamma(\M_{12}{:}2,S_4,g)$ must be a non-normal
2-transitive tetravalent Cayley graph of $\M_{11}$. Moreover,
for a coset graph $\Gamma(G,K, g)$, it is not difficult to verify that
$\Gamma(G,K,g)\cong\Gamma(G,K^x,g^x)$ for any $x\in \Aut(G)$
(see~\cite[Fact 2.2]{FPSuzuki}). It then follows
that all coset graphs $\Gamma(\M_{12}{:}2\ , K,g)$ with $g\in
\D_i$ are isomorphic, for $i=1,2$. Fix $g_i\in\D_i$ for $i=1,2$. Define

\begin{equation}\label{eqn:3}
\Gamma(\D_i) := \Gamma(\M_{12}{:}2, S_4, g_i),\ i=1,2.
\end{equation}
These two graphs are, up to isomorphism, the only tetravalent
2-transitive non-normal Cayley graphs of $\M_{11}$, for
$\Aut(\Gamma)=\M_{12}{:}2$.

Next we consider $\Gamma(\M_{12},K,g)$. Computation shows that
$\M_{12}$ has a unique class of subgroups $K\cong A_4$
satisfying $K\cap \M_{11}=1$. So we may choose
$K=A_4$ such that $A_4$ is a subgroup of  $S_4$
given in the previous case. In addition, there are in total twelve
2-elements $g$ such that $K$ and $g$ satisfy (a) and (b) in
Lemma~\ref{cosetgraph}. Moreover, these 2-elements are all in
$\D_1$ above and $K$ is transitive on $\D_1$ by
conjugate action. Thus, up to isomorphism, we obtain a unique
tetravalent 2-transitive non-normal Cayley graph of $\M_{11}$,
which is isomorphic to 
$$
\Gamma^*(\D_1) := \Gamma(\M_{12}, A_4, g_1)
$$
for $g_1\in \D_1$.

We claim that $\Gamma^*(\D_1)$ and $\Gamma(\D_1)$ are isomorphic. Note that $A_4$ is contained in
$S_4$ and $\M_{12}$ is transitive on both
$V\Gamma^*(\D_1)$ and $V\Gamma(\D_1)$. Define
$$\sigma:\ \ A_4x\mapsto S_4x,\ \  x\in\M_{12}.$$
It is straightforward to verify that $\sigma$ is an isomorphism from $\Gamma^*(\D_1)$ to $\Gamma(\D_1)$.

Therefore, any quasiprimitive tetravalent 2-transitive non-normal Cayley graph of a finite
simple group is isomorphic to $\Gamma(\D_1)$ or $\Gamma(\D_2)$.

\bigskip
{\bf Situation 2: $A$ is not quasiprimitive on $V\Gamma$.}

In this case, let $H$ be a maximal intransitive normal subgroup of
$A$. Recall that $A=GA_\a$ with $G\cap A_\a=1$, where $A_\alpha\cong A_4$ or $S_4$.
By Lemma~\ref{fpw}, we see that only (c)(i) in Lemma~\ref{fpw}
occurs. This means that $A/H$ is an almost simple group, and $\soc(A/H)$
contains $GH/H\cong G$ and is transitive on $V\Gamma_H$, where
$\Gamma_H$ is the quotient graph of $\Gamma$ relative to $H$. Set 
$T = \soc(A/H)$. 

\smallskip
{\bf Case 1: $T\cong G$}

Since $G$ is simple and $H\lhd A$, we have $H\cap G=1$, which implies that
$|H|$ is a divisor of $|S_4|=24$. If $G$ acts on $H$
nontrivially by conjugation, then $G$ is isomorphic to a subgroup of
$\Aut(H)$. On the other hand, it is not hard to verify that this is
not the case for $G=\PSL_2(11)$, $\M_{11}$, $\M_{23}$
or $A_{11}$. So we assume that $GH=G\times H$. Now $T\cong G$.
It follows that $|\Out(T)|=1$ for $G=\M_{11}$ and $G=\M_{23}$,
while $|\Out(T)|=2$ for $G=\PSL_2(11)$ and $G=A_{11}$. In the former
case we have $A=G\times H$ and hence $G\lhd A$, which is impossible.
In the latter case we have $|A:G\times H|=1$ or 2, which implies
that $G\lhd A$, a contradiction. So Case 1 does not occur.

\smallskip
{\bf Case 2: $T\not\cong G$}

Clearly, $G\cap H=1$ and $|H|$ divides $|A_\alpha|$. So 24 is
divisible by $|H|$. If 3 divides $|H|$, then $|T:GH/H|$ is a divisor
of $8=2^3$, which is impossible by~\cite{Guralnick}. So $H$ is a
$2$-group with $|H|$ dividing 8. Further, if $H_\alpha\ne1$, then
$d(\Gamma_H)=2$, and hence $\Aut(\Gamma_H)$ is a dihedral group, a
contradiction. Hence $H$ is semiregular on $V\Gamma$ and
$d(\Gamma_H)=d(\Gamma)=4$.

For $\a=1\in G=V\Gamma$, set $\bar\alpha=\alpha^H$. Since
$\Gamma$ is $A$-arc-transitive, $\Gamma_H$ is $A/H$-arc
transitive. Moreover, since $(A/H)_{\bar\alpha}=\{Hx\mid x\in
A_\alpha\}$ and $H\cap A_\alpha=1$, $(A/H)_{\bar\alpha}\cong
A_\alpha$. From this it follows that $\Gamma_H$ is $(A/H,2)$-arc
transitive.

Next we determine all pairs $(T,G)$. Note that $|A:G|$ divides $24$. 
So $|A/H:GH/H|$ divides $24/|H|$. Since $H$ is a 2-group,
$24/|H|$ is 6 or 12. Hence, by~\cite{atlas}, $(T,G)$ must be one of
the following pairs:
\begin{equation}\label{eqn:4}
(\M_{11},\PSL_2(11)),\ (\M_{12},\M_{11}),\
(A_{12},A_{11}).\end{equation}
From this we obtain that $|H|=2$ and $\soc(A/H)=A/H=T$. Thus
$\Gamma_H$ is $(T,2)$-arc-transitive with $|V\Gamma_H|=|G|/2$ and
$(T,G)$ given in (\ref{eqn:4}).

Finally, we construct all $(T,2)$-arc-transitive graphs for
$(T,G)$ as given in (\ref{eqn:4}). Note that $|T|/|G|=12$, $|V\Gamma_H|=|G|/2$ and
$|T_{\bar\alpha}|=24$. So $T_{\bar\alpha}\cong S_4$.

Consider $T=\M_{11}$ first. There is only one class of subgroups
isomorphic to $S_4$. Let $K$ be such a subgroup.
Computation using GAP \cite{Gap} shows that there is no
2-element $g$ in $T$ satisfying (a) and (b) in
Lemma~\ref{cosetgraph}, which is a contradiction.

Consider $T=\M_{12}$. Computation shows that there are four classes
of subgroups isomorphic to $S_4$. Using GAP \cite{Gap}, we obtain that there is no 2-element $g\in T$, together with $K\cong S_4$,
satisfying (a) and (b) in Lemma~\ref{cosetgraph}, which is
a contradiction.

Finally, consider $T=A_{12}$. There are 24 conjugate classes of subgroups
$K\cong S_4$. A systematic search using GAP \cite{Gap} shows that there
is no 2-element $g\in T$ such that $K$ and $g$ satisfy (a) and
(b) in Lemma~\ref{cosetgraph}. This completes the proof of
Proposition~\ref{M11}.
\qed
\end{proof}

\section{Proof of Theorem \ref{them1}}

By Proposition~\ref{M11}, a connected tetravalent 2-transitive 
non-normal Cayley graph of a finite simple group is isomorphic to 
$\Gamma(\D_1)$ or $\Gamma(\D_2)$. In this section
we prove that these two graphs are non-isomorphic and thus complete the proof of Theorem \ref{them1}.

\begin{proposition}\label{isomorphism}
The graphs $\Gamma(\D_1)$ and $\Gamma(\D_2)$ defined in (\ref{eqn:3}) are not isomorphic.
\end{proposition}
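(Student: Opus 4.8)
The plan is to distinguish $\Gamma(\D_1)$ and $\Gamma(\D_2)$ by exhibiting a graph-theoretic invariant that takes different values on the two graphs. Both graphs share the same vertex set $[G:K]$ with $G=\M_{12}{:}2$ and $K\cong S_4$, the same valency $4$, and the same full automorphism group (up to the abstract isomorphism type $\M_{12}{:}2$), so the difference must come from the particular connection element $g_i\in\D_i$ used in the coset-graph construction. The most direct route is a computational one in the spirit of the rest of the proof: using GAP, construct the two coset graphs $\Gamma(\M_{12}{:}2,S_4,g_1)$ and $\Gamma(\M_{12}{:}2,S_4,g_2)$ explicitly from representatives $g_1\in\D_1$, $g_2\in\D_2$, and then compare a canonical invariant such as the girth, the number of cycles of small length through a vertex, the diameter, or the full list of distance distributions.

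The cleanest single invariant is likely to be the \emph{girth} together with the number of shortest cycles through an edge. Since $\D_1$ and $\D_2$ are distinct $K$-orbits arising from genuinely different $2$-elements $g$, I would first compute the girth of each graph; if they already differ, the proof is complete. If the girths coincide, I would refine the invariant by counting, for a fixed vertex $\bar\alpha$, the number of closed walks or induced cycles of each small length, equivalently by examining the local structure of the neighbourhood and the second neighbourhood. The key structural fact enabling this is Lemma~\ref{cosetgraph}, which guarantees that each $\Gamma(\D_i)$ is a well-defined connected tetravalent $(G,2)$-arc-transitive graph on $|G:K|=|\M_{12}{:}2|/24$ vertices, so the computation is of a fixed and modest size.

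First I would fix the explicit permutation representation of $\M_{12}{:}2=\Aut(\M_{12})$ of degree $24$ from the ATLAS, locate the subgroup $K\cong S_4$ satisfying $K\cap\M_{11}=1$ described in part (a) of the computation in the proof of Proposition~\ref{M11}, recompute the set $\D$ of sixteen admissible $2$-elements and its partition into $\D_1$ and $\D_2$, and pick representatives $g_1,g_2$. Then I would build the two coset graphs on $[G:K]$ with adjacency given by $\{Kx,Ky\}\in E$ iff $xy^{-1}\in Kg_iK$, verify each is connected and tetravalent, and compute the chosen invariant for both. Since we already know $\Aut(\Gamma(\D_i))=\M_{12}{:}2$ with vertex stabiliser $S_4$ for each $i$, an isomorphism between them would have to carry one conjugacy setup to the other; the nonisomorphism is confirmed as soon as the invariant values differ.

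The main obstacle is not conceptual but one of reliability: since the two graphs are highly symmetric, have the same order, valency, and abstract automorphism group, coarse invariants (order, valency, edge count, automorphism group order) all agree, so I must be careful to select an invariant fine enough to separate them. The real work is locating such an invariant and certifying the computation; I expect the girth or the local cycle-count at a vertex to suffice, but if both graphs turn out to be, say, arc-transitive graphs with identical distance distribution, I would instead directly test for isomorphism in GAP (for instance via a canonical-form or graph-isomorphism routine) and report that the two canonical forms differ, which is decisive and sidesteps the need to name an explicit separating invariant.
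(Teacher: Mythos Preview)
Your plan would work if carried out, but it takes a genuinely different route from the paper's argument. You propose to distinguish the two graphs by a computed invariant (girth, cycle counts, distance distribution) or, failing that, by a black-box isomorphism test on the $7920$-vertex coset graphs. The paper instead exploits the algebraic rigidity of the situation: both $\Gamma(\D_1)$ and $\Gamma(\D_2)$ live on the \emph{same} vertex set $V=[G:K]$ and have the \emph{same} full automorphism group $X=\M_{12}{:}2$ acting on $V$, so any isomorphism $\phi\colon\Gamma(\D_1)\to\Gamma(\D_2)$ must lie in $N_{\Sym(V)}(X)$. Using that $K\cong S_4$ is self-normalising in $X$ (so $C_{\Sym(V)}(X)=1$) and that $\Out(\Aut(\M_{12}))=1$, one gets $N_{\Sym(V)}(X)=X$, hence $\phi\in X=\Aut(\Gamma(\D_1))$, which would force $\Gamma(\D_1)=\Gamma(\D_2)$ as edge sets on $V$. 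A single check that the neighbourhoods $\Gamma_1(\alpha)=\{S_4 g_1 x:x\in S_4\}$ and $\Gamma_2(\alpha)=\{S_4 g_2 x:x\in S_4\}$ are distinct then gives the contradiction.

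The payoff of the paper's route is that it replaces a search for a separating invariant (which, as you yourself note, may be delicate since order, valency, arc-transitivity, and automorphism group all coincide) or a large isomorphism computation with a tiny local check of four cosets. Your approach has the virtue of being generic and not relying on the special facts $\Out(\Aut(\M_{12}))=1$ and $N_X(K)=K$, but as written it remains a plan rather than a proof: you have not yet named a specific invariant that separates the graphs, and deferring to a canonical-form routine amounts to ``the computer says no'' without the structural explanation the paper's argument provides.
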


\begin{proof} 
Write $\Gamma_i=\Gamma(\D_i)$ and $X_i = \Aut(\Gamma)_i$,
for $i=1,2$. It follows from Proposition~\ref{M11} that $\Gamma_1$
and $\Gamma_2$ have the same vertex set and full automorphism group.
Denote $V=V\Gamma_1=V\Gamma_2$ and $X=X_1=X_2=\Aut(\M_{12})=
\M_{12}{:}2$. Now $X_\alpha=S_4$. 
Suppose by way of contradiction that $\Gamma_1\cong \Gamma_2$. Let
$\phi$ be an isomorphism from $\Gamma_1$ to $\Gamma_2$. Then $\phi\in{\rm N}_{\Sym(V)}(X)$
by~\cite[Fact 2.3]{FPSuzuki}. Write $N={\rm N}_{\Sym(V)}(X)$ and
$C={\rm C}_{\Sym(V)}(X)$. Then $N/C$ is isomorphic to a subgroup of
$\Aut(X)$. Moreover, since the vertex stabilizer $X_\alpha\cong 
S_4$ is self-normalized in $X$ (see Case 2, result (c) of Situation 1 in
Section 3), $C=1$ by~\cite[Proposition 2.4]{FPSuzuki} and hence $N$
is a subgroup of $\Aut(X)$. Note that $X=\Aut(\M_{12})$ and
$\Out(X)=1$. Thus $N=X$. It follows that $\phi\in X$ is an automorphism
of $\Gamma_1$, which implies that $\Gamma_1=\Gamma_2$.
On the other hand, for $\alpha=S_4\in V$, the neighbourhood 
$\Gamma_i(\alpha)$ of $\alpha$
in $\Gamma_i$ is given by
$$\Gamma_i(\alpha) = \{S_4g_ix\mid x\in S_4\},\ {\rm for} \ i=1,2.$$
However, computation shows that $\Gamma_1(\alpha)\ne\Gamma_2(\alpha)$, which 
contradicts the statement that $\Gamma_1=\Gamma_2$. Therefore, $\Gamma_1$ and $\Gamma_2$ are not isomorphic.
\qed
\end{proof}

\begin{pf} \textbf{of Theorem \ref{them1}.}
By \cite[Theorem 1.1]{FLX}, Lemma \ref{PGFL}, Proposition \ref{M11} and Proposition \ref{isomorphism}, we obtain Theorem \ref{them1} immediately.
\qed
\end{pf}

\medskip 
\noindent \textbf{Acknowledgements}~~X. G. Fang and J. Wang were supported by the National Natural Science Foundation of China (Grant No. 11931005). S. Zhou was supported by the Research Grant Support Scheme of The University of Melbourne.

\end{document}